\newtheorem{proposition}{Proposition}
\newtheorem{defn}{Definition}
\newtheorem{assump}{Assumption}
\newtheorem{prob}{Problem}
\newtheorem{remark}{Remark}
\newcommand{\norm}[1]{\left\lVert{#1}\right\rVert}
\newcommand{\abs}[1]{\left\lvert{#1}\right\rvert}
\renewcommand{\geq}{\geqslant}
\renewcommand{\leq}{\leqslant}
\newcommand{\R}{\mathbb{R}}
\newcommand{\N}{\mathbb{N}}
\renewcommand{\P}{\mathcal{P}}
\newcommand{\Sw}{\mathcal{S}}
\newcommand{\K}{\mathcal{K}}
\newcommand{\KL}{\mathcal{KL}}
\newcommand{\Kinfty}{\mathcal{K}_{\infty}}
\newcommand{\EE}{\mathrm{E}}
\title{A randomized algorithm for the stabilization of switched nonlinear systems\\under restricted switching}
\author{Atreyee Kundu%
	\thanks{The author is with the Department of Electrical Engineering, Indian Institute of Science Bangalore, Bengaluru - 560012, Karnataka, India, E-mail: \texttt{atreyeek@iisc.ac.in}}
}
\begin{document}

	\maketitle

	\begin{abstract}
        This paper deals with input/output-to-state stability (IOSS) of switched nonlinear systems in the discrete-time setting. We present an algorithm to construct periodic switching signals that obey pre-specified restrictions on admissible switches between the subsystems and admissible dwell times on the subsystems, and identify sufficient conditions on the individual subsystems, the admissible switches and admissible dwell times under which a switching signal obtained from our algorithm preserves stability of a switched system with overwhelming probability. We recover our earlier result on probabilistic techniques for the design of switching signals that preserve global asymptotic stability of switched linear systems under sufficient conditions on the properties of the individual subsystems and the admissible dwell times on the subsystems.
    \end{abstract}
\section{Introduction}
\label{s:intro}
\subsection{The problem}
\label{ss:prob_stat}
    A \emph{switched system} has two ingredients --- a family of systems and a switching signal. The \emph{switching signal} selects an \emph{active subsystem} at every instant of time, i.e., the system from the family that is currently being followed \cite[\S 1.1.2]{Liberzon}. Switched systems find wide applications in power systems and power electronics, aircraft and air traffic control, network and congestion control, etc. \cite[p.\ 5]{Sun}. 

    We consider a discrete-time switched system with inputs and outputs
    \begin{align}
    \label{e:swsys}
        x(t+1) &= f_{\sigma(t)}(x(t),v(t)),\:\:x(0) = x_{0},\:\:t\in\N_{0},\nonumber\\
        y(t) &= h_{\sigma(t)}(x(t)),
    \end{align}
   generated by
   \begin{itemize}[label=$\circ$,leftmargin=*]
    \item a family of systems
    \begin{align}
    \label{e:family}
        x(t+1) &= f_{i}(x(t),v(t)),\:\:x(0) = x_{0},\:\:p\in\P,\:\:t\in\N_{0},\nonumber\\
        y(t) &= h_{i}(x(t)),
    \end{align}
    where $x(t)\in\R^{d}$, $v(t)\in\R^{m}$ and $y(t)\in\R^{p}$ are the vectors of states, inputs and outputs at time $t$, respectively, $\P = \{1,2,\ldots,N\}$ is an index set, and
    \item a switching signal: $\sigma:\N_{0}\to\P$.
   \end{itemize}
   We assume that for each $i\in\P$, $\ker f_{i}(\cdot,0) = \{0\}$. Let $(x(t))_{t\in\N_{0}}$ be the solution to the switched system \eqref{e:swsys} corresponding to a switching signal $\sigma$ and starting at $x_{0}$ at $t=0$. The dependence of $(x(t))_{t\in\N_{0}}$ on $\sigma$ is suppressed for notational simplicity.

    Let \(\P_{S}\) and \(\P_{U}\) denote the sets of indices of input/output-to-state stable (IOSS) and unstable subsystems, respectively, \(\P = \P_S\sqcup\P_U\). We let \(E(\P)\) denote the set of ordered pairs \((i,j)\) such that a switch from subsystem \(i\) to subsystem \(j\) is admissible, \(i,j\in\P\), \(i\neq j\), and \(\Delta_{m}\) and \(\Delta_{M}\) denote the admissible minimum and maximum dwell times on each subsystem, respectively.

    \begin{remark}
    \label{rem:restrictions}
        A distinction between admissible and inadmissible transitions captures situations where switches between certain subsystems may be prohibited. For instance, in automobile gear switching, certain switching orders (e.g., from first gear to second gear, etc.) are followed \cite[\S III]{Antsaklis_survey}. A restriction on minimum dwell time on subsystems arises in situations where actuator saturations may prevent switching frequency beyond a certain limit. Also, in order to switch from one component to another, a system may undergo certain operations of non-negligible durations \cite[\S I]{Heydari_ACC16} resulting in a minimum dwell time restriction on subsystems. Restricted maximum dwell time is natural to systems whose components need regular maintenance or replacements, e.g., aircraft carriers, MEMS systems, etc. Moreover, systems dependent on diurnal or seasonal changes, e.g., components of an electricity grid, have inherent restrictions on admissible dwell times.
    \end{remark}

    Let \(0=:\tau_{0}<\tau_{1}<\cdots\) be the \emph{switching instants}; these are the instants in time when \(\sigma\) changes its values. We call a switching signal \(\sigma\) \emph{admissible} if it satisfies: \((\sigma(\tau_{k}),\sigma(\tau_{k+1}))\in E(\P)\) and \(\tau_{k+1}-\tau_{k}\in[\Delta_{m}:\Delta_{M}]\), \(k=0,1,2,\ldots\). Let \(\Sw\) denote the set of all admissible switching signals. Recall that
    \begin{defn}
   \label{d:dt-ioss}
    The switched system \eqref{e:swsys} is input/output-to-state stable (IOSS) for a $\sigma\in\Sw$ if there exist functions $\chi_{1}\in\KL$ and $\chi_{2}$, $\chi_{3}\in\K$ such that for all bounded inputs $v:\N_{0}\to\R^{m}$ and $x(0)\in\R^{d}$, we have
    \begin{align}
    \label{e:dt-ioss}
        \norm{x(t)}\leq\chi_{1}(\norm{x(0)},t)+\chi_{2}(\norm{v}_{t})+\chi_{3}(\norm{y}_{t})
    \end{align}
    for all $t\in\N_{0}$, where $\norm{\cdot}$ is the Euclidean norm and $\norm{w}_{t} := \sup\{\norm{w(s)}|s\in\N_{0},s\leq t\}$ denotes the supremum norm of a signal $w$ taking values in some Euclidean space.
   \end{defn}
   Note that if $\chi_{3}\equiv0$, then \eqref{e:dt-ioss} reduces to input-to-state stability (ISS) of \eqref{e:swsys}, and if also $v\equiv 0$, then \eqref{e:dt-ioss} reduces to global asymptotic stability (GAS) of \eqref{e:swsys}.

    We are interested in the following problem:
    \begin{prob}
    \label{prob:mainprob}
        Design (algorithmically) a switching signal \(\sigma\in\Sw\) that preserves IOSS of the switched system \eqref{e:swsys}.
    \end{prob}

    \begin{remark}
    \label{rem:non-trivial}
        Observe that even though IOSS subsystems are present in the family \eqref{e:family}, Problem \ref{prob:mainprob} does not admit a trivial solution. Indeed, a restriction on the admissible maximum dwell times on the subsystems prevents the possibility that \(\sigma\equiv i\) for a fixed \(i\in\P_{S}\).
    \end{remark}
\subsection{Prior works}
\label{s:prior_works}
    IOSS of switched systems, both in the continuous and discrete-time regime, has attracted considerable research attention over the past decade. In \cite{Mancilla2008} IOSS of switched differential inclusions was studied. Conditions on the family of systems were identified such that a switched system generated under any switching signal is stable. A class of switching signals guaranteeing IOSS of switched nonlinear systems was characterized in \cite{Liberzon_IOSS}. The stabilizing switching signals obey average dwell time property \cite[\S 3.2.2]{Liberzon} and constrained point-wise activation of non-IOSS subsystems.

    Recently in \cite{xyz3,xyz4} we addressed IOSS of switched systems under restrictions on admissible switches between the subsystems and admissible dwell times on the subsystems by employing multiple Lyapunov-like functions \cite{Branicky98} and graph-theoretic tools \cite{Bollobas}. A weighted directed graph is associated to a switched system: its vertices correspond to the subsystems, edges correspond to the admissible switches between the subsystems, and vertex and edge weights are scalars computed from Lyapunov-like functions corresponding to the subsystems. The proposed algorithm generates a stabilizing periodic switching signal in two stages: first, it identifies a cycle on the underlying weighted directed graph of a switched system that satisfies certain properties involving the vertex and edge weights and the admissible dwell times on the subsystems (we call it a \(\Delta\)-contractive cycle), and second, it constructs a switching signal by employing this cycle. We also recast the problem of detecting/designing a \(\Delta\)-contractive cycle as a ``negative cycle'' detection/design problem, for which many off-the-shelf algorithms are available. These algorithms typically involve storing connectivity and weights associated to the vertices and edges of the graph under consideration, in memory prior to their application. In addition, detection of a negative cycle often involves a search over all paths on the graph that satisfy certain properties. Hence, the design of stabilizing switching signals by involving negative cycle detection method is not suitable for large-scale switched systems. Indeed, the total number of cycles on a graph increases exponentially with the size of the graph and storing a large set of vertex and edge weights requires a huge memory. These features motivate a search for cycle detection algorithms that work well also for large graphs.
\subsection{Our contributions}
\label{ss:contri}
    In this paper we report an extension of our earlier result for GAS of switched linear systems, proposed in \cite{def}, to the setting of switched nonlinear systems with inputs and outputs under restrictions on admissible switches between the subsystems and admissible dwell times on the subsystems. We solve Problem \ref{prob:mainprob} in three steps:
    \begin{itemize}[label = \(\circ\), leftmargin = *]
        \item first, we discuss a randomized algorithm that detects cycles on the underlying weighted directed graph of a switched system without requiring complete knowledge of the vertex and edge weights and without exploring the connectivity of the entire graph.
        \item second, we identify conditions on the connectivity, vertex weights scaled by an appropriate factor and edge weights of the underlying weighted directed graph of a switched system under which a cycle obtained from our algorithm is \(\Delta\)-contractive with overwhelming probability. The condition on connectivity of the graph corresponds to admissible switches between the subsystems and the conditions on scaled vertex weights and edge weights correspond to properties of the individual subsystems (at an abstract level of certain scalars obtained from Lyapunov-like functions corresponding to the subsystems) and the admissible minimum and maximum dwell times on the subsystems.
        \item third, we employ \(\Delta\)-contractive cycles to devise an algorithm that constructs a switching signal which obeys the pre-specified restrictions on admissible switches and admissible dwell times. A switching signal obtained from our algorithm preserves IOSS of a switched system with high probability.
    \end{itemize}
    The main features of our results are the following:
    \begin{itemize}[label = \(\circ\), leftmargin = *]
        \item Our algorithm for the design of \(\Delta\)-contractive cycles does not require complete knowledge of the vertex and edge weights of the underlying weighted directed graph of a switched system for its execution, and hence also fit for large-scale switched systems (e.g., variable structure systems with a large number of substructures) for which not all the weights can be kept in memory at once. We provide probabilistic guarantees for the \(\Delta\)-contractivity of a cycle under mild statistical hypothesis on the connectivity and weights of the graph under consideration.
        \item The above algorithm exhibits an ``online learning'' character in the sense that starting with a rough probabilistic description of the underlying weighted directed graph of a switched system (i.e., without the knowledge of precise values of the weights), we explore the directed graph and synthesize a cycle during this exploration that satisfies certain conditions with high probability.
        \item If the constituent subsystems of a switched system are prone to evolve/drift over time in a manner that is not precisely known but certain statistical estimates of the nature of evolution are available, then our algorithm can be applied and it will construct a \(\Delta\)-contractive cycle with uniform probabilistic guarantees over all such evolutions.
    \end{itemize}
    The set of results presented in this paper recovers a subset of our results in \cite{def} under sufficient conditions on the properties of the individual subsystems and the admissible dwell times on the subsystems.
\subsection{Paper organization}
\label{ss:paper_org}
    The remainder of this paper is organized as follows: in \S\ref{s:prelims} we catalog a set of preliminaries for our results. Our main results appear in \S\ref{s:mainres}. We conclude in \S\ref{s:concln} with a brief discussion of future research direction.

{\bf Notation}: For a finite set \(A\), its size is denoted by \(\abs{A}\). For \(y\in\R\), \(\lfloor y\rfloor\) denotes the largest integer smaller than or equal to \(y\).
\section{Preliminaries}
\label{s:prelims}
\subsection{The family of systems}
\label{ss:family}
    \begin{assump}
    \label{a:Lyap-prop1}
        For each $i\in\P$, there exist continuous functions $V_{i}:\R^{d}\to[0,+\infty[$, class $\Kinfty$ functions $\underline{\alpha}$, $\overline{\alpha}$, class $\K$ function $\gamma_{1}$, $\gamma_{2}$ and scalars $\lambda_{i}$ with $0<\lambda_{i}<1$ for $i\in\P_{S}$ and $\lambda_{i} > 1$ for $i\in\P_{U}$ such that for all $\xi\in\R^{d}$ and $\eta\in\R^{m}$, we have
        \begin{align}
        \label{e:Lyap-prop1}
            \underline{\alpha}(\norm{\xi})\leq V_{i}(\xi) \leq \overline{\alpha}(\norm{\xi}),
        \end{align}
        and
        \begin{align}
        \label{e:Lyap-prop2}
            V_{i}(f_{i}(\xi,\eta))\leq\lambda_{i}V_{i}(\xi) + \gamma_{1}(\norm{\eta}) + \gamma_{2}(\norm{h_{i}(\xi)}),\:\:t\in\N_{0}.
        \end{align}
    \end{assump}
    The functions $(V_{i})_{i\in\P}$ satisfying conditions \eqref{e:Lyap-prop1} and \eqref{e:Lyap-prop2} are called the (IOSS-)Lyapunov-like functions and are standard in the literature, see e.g., \cite{Cai_Lyap} for details regarding existence of such functions and their properties.
    \begin{assump}
    \label{a:Lyap-prop2}
        For each pair $(i,j)\in E(\P)$, there exist $\mu_{ij}\geq 1$ such that the following inequality holds:
        \begin{align}
        \label{e:Lyap-prop3}
            V_{j}(\xi) \leq \mu_{ij}V_{i}(\xi)\:\:\text{for all}\:\xi\in\R^{d}.
        \end{align}
    \end{assump}
\subsection{The underlying weighted directed graph of \eqref{e:swsys}}
\label{ss:digraph}
    We associate a weighted directed graph $G(\P,E(\P))$ to the switched system \eqref{e:swsys} as follows:
    \begin{itemize}[label = $\circ$, leftmargin = *]
        \item The set of vertices is the index set $\P$.
        \item The set of edges $E(\P)$ consists of a directed edge from vertex $i$ to vertex $j$ whenever a transition from subsystem $i$ to subsystem $j$ is admissible.
        \item The vertex weights are: $w(j) = -\abs{\ln\lambda_{j}}$, $j\in\P_S$, $w(j) = \abs{\ln\lambda_{j}}$, $j\in\P_U$, and the edge weights are: $w(i,j) = \ln\mu_{ij}$, $(i,j)\in E(\P)$, where $\lambda_{j}$ and $\mu_{ij}$ are as described in Assumptions \ref{a:Lyap-prop1} and \ref{a:Lyap-prop2}, respectively.
    \end{itemize}

    A walk \(W\) on \(G(\P,E(\P))\) is an alternating sequence of vertices and edges \(W = v_{0},e_{1},v_{1},e_{2},v_{2},\ldots,v_{n-1},e_{n},v_{n}\), where \(v_{k}\in\P\), \(e_{k} = (v_{k-1},v_{k})\in E(\P)\), \(0<k\leq n\). The length of a walk is its number of edges, counting repetitions, e.g., the length of \(W\) is \(n\). The initial vertex of \(W\) is \(v_0\) and the final vertex of \(W\) is \(v_n\). If \(v_0 = v_n\), then we say that the walk \(W\) is closed. A closed walk \(W\) is called a cycle if the vertices \(v_k\), \(0 < k \leq n-1\) are distinct from each other and \(v_0\).

    \begin{defn}{\cite[Definition 2]{xyz4}}
    \label{d:delta-contrac}
        A cycle \cite[p.\ 5]{Bollobas} $W = v_{0}, (v_{0},v_{1}),v_{1},\ldots,v_{n-1},(v_{n-1},v_{0})$,$v_{0}$ on $G(\P,E(\P))$ is called $\Delta$-contractive if there exist $\Delta_{v_{k}}\in[\Delta_{m}:\Delta_{M}]$, $k=0,1,\ldots,n$, such that $W$ satisfies
        \begin{align}
        \label{e:delta-contrac-ineq}
            \Gamma(W) := \sum_{k=0}^{n-1}w(v_{k})\Delta_{v_{k}} + \sum_{\substack{k=0\\v_{n}:=v_{0}}}^{n-1}w(v_{k},v_{k+1}) < 0,
        \end{align}
        where $n$ is the length of $W$, $\Delta_{m}$ and $\Delta_{M}$ are the admissible minimum and maximum dwell times on each subsystem (vertex) $v_{k}\in\P$, respectively, $w(v_{k})$ is the weight of the vertex $v_{k}\in\P$ and $w(v_{k},v_{k+1})$ is the weight of the edge $(v_{k},v_{k+1})\in E(\P)$. The scalar $\Delta_{v_{k}}$ is called $\Delta$-parameter of vertex $v_{k}$, $k=0,1,\ldots,n-1$, and $\displaystyle{\Delta_{W} := \sum_{k=0}^{n-1}\Delta_{v_{k}}}$.
    \end{defn}

    We shall employ \(\Delta\)-contractive cycles on \(G(\P,E(\P))\) to construct elements of \(\Sw\) that preserve IOSS of the switched system \eqref{e:swsys}. We are now in a position to present our results.
\section{Results and discussions}
\label{s:mainres}
    We begin with a set of properties of the underlying weighted directed graph of a switched system. These properties will be useful in guaranteeing \(\Delta\)-contractivity of cycles that we shall design momentarily.

    Let \(N_{\P_{S}}^{+}(v):= \{u\in\P_{S}\:|\:(v,u)\in E(\P)\}\) denote the set of outneighbours of a vertex \(v\in\P_{S}\) and \(d_{\P_{S}}^{+}(v) := \abs{N_{\P_{S}}^{+}(v)}\) denote the outdegree of \(v\in \P_{S}\).
    \begin{defn}
    \label{d:nice_connect}
        Let \(\Phi:\N\to\R\) be a monotone increasing function. A weighted directed graph \(G(\P,E(\P))\) is said to be \emph{nicely connected} if \(d^{+}_{\P_{S}}(j)\geq\lfloor\Phi(\abs{\P_{S}})\rfloor\) for all \(j\in\P\).
    \end{defn}

    The notion of nice connectivity of \(G(\P,E(\P))\) describes the richness of admissible switches among the subsystems and is borrowed verbatim from \cite[Definition 4.1]{def}. The function \(\Phi\) serves the purpose of quantifying the density of edges in \(G(\P,E(\P))\) in terms of its order. Every vertex in a nicely connected directed graph has at least \(\lfloor\Phi(\abs{\P_{S}})\rfloor\)-many IOSS outneighbours.

    \begin{defn}
    \label{d:nice_weight}
        A weighted directed graph \(G(\P,E(\P))\) is said to be \emph{nicely \(\Delta\)-weighted} if the following hold:
        \begin{itemize}[label = \(\circ\), leftmargin = *]
            \item there exist \(\Delta\in[\Delta_{m}:\Delta_{M}]\), \(\beta\), \(B > 0\) satisfying \(0 < \beta < B\) such that the vertex weights \(w(j)\) satisfy \(0 < w(j)\Delta\leq B\) and \(\EE[w(j)\Delta\:|\:\{w_{\ell}\Delta\}_{\ell\neq j},\{w(k,\ell)\}_{(k,\ell)\in E(\P)}] = \beta\) for all \(j\in\P\), and
            \item there exist constants \(A > 0\) and \(\alpha < \beta\) such that for every \((i,j)\in E(\P)\), the edge weight \(w(i,j)\in[-A,A]\) and \(\EE[w(i,j)\:|\:\{w_{i}\Delta\}_{i\in\P},\{w(k,\ell)\}_{(k,\ell)\neq(i,j)}]\leq\alpha\).
        \end{itemize}
    \end{defn}

    The notion of nice \(\Delta\)-weights of \(G(\P,E(\P))\) is a slightly modified version of what is called nice weights of \(G(\P,E(\P))\) in \cite[Definition 4.1]{def}. We accommodate the admissible dwell times on the subsystems into the latter to define the former. In particular,
    \begin{proposition}
    \label{prop:recover1}
        Let \(\Delta_{m} = 1\) and the conditions of Definition \ref{d:nice_weight} hold with \(\Delta = \Delta_{m} = 1\). Then \(G(\P,E(\P))\) is nicely weighted.
    \end{proposition}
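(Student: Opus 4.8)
The plan is to show that imposing Definition \ref{d:nice_weight} with the particular choice $\Delta = \Delta_{m} = 1$ reduces it, term by term, to the definition of a nicely weighted graph in \cite[Definition 4.1]{def}; since the paper explicitly describes nice $\Delta$-weights as the "slight modification" of nice weights obtained by folding in the dwell time, setting that dwell time to $1$ should simply undo the modification. Concretely, I would recall that, per \cite[Definition 4.1]{def}, $G(\P,E(\P))$ is nicely weighted when (i) there exist $\beta, B > 0$ with $0 < \beta < B$ such that $0 < w(j) \le B$ and $\EE[w(j)\mid\{w_{\ell}\}_{\ell\neq j},\{w(k,\ell)\}_{(k,\ell)\in E(\P)}] = \beta$ for every $j\in\P$, and (ii) there exist $A > 0$ and $\alpha < \beta$ such that $w(i,j)\in[-A,A]$ and $\EE[w(i,j)\mid\{w_{i}\}_{i\in\P},\{w(k,\ell)\}_{(k,\ell)\neq(i,j)}]\le\alpha$ for every $(i,j)\in E(\P)$.

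Next I would substitute $\Delta = 1$ into the two bullet points of Definition \ref{d:nice_weight} and match them against (i) and (ii). In the first bullet, the inequality $0 < w(j)\Delta \le B$ becomes $0 < w(j)\le B$, the conditioning family $\{w_{\ell}\Delta\}_{\ell\neq j}$ becomes $\{w_{\ell}\}_{\ell\neq j}$ (multiplying by the deterministic scalar $1$ leaves the generated $\sigma$-algebra unchanged), and $\EE[w(j)\Delta\mid\cdots] = \beta$ becomes $\EE[w(j)\mid\cdots] = \beta$; this is exactly (i), with the same $\beta, B$ and the same constraint $0 < \beta < B$. In the second bullet, $\Delta$ enters only through the conditioning family $\{w_{i}\Delta\}_{i\in\P}$, which likewise collapses to $\{w_{i}\}_{i\in\P}$, while $w(i,j)\in[-A,A]$, $\EE[w(i,j)\mid\cdots]\le\alpha$ and $\alpha<\beta$ are already $\Delta$-free; this is exactly (ii), with the same $A, \alpha$.

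I would then conclude that the constants $\beta, B, A, \alpha$ witnessing nice $\Delta$-weightedness of $G(\P,E(\P))$ (with $\Delta = 1$) also witness its nice weightedness, which is the claim. I expect no genuine obstacle here: the proof is a direct comparison of two definitions, and the only point deserving an explicit line is the remark that scaling each vertex weight by the deterministic factor $1$ alters neither the conditioning $\sigma$-algebras nor the values of the conditional expectations, so the probabilistic hypotheses carry over unchanged.
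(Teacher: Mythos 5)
Your proposal is correct and matches the paper's treatment: the paper offers no explicit proof of Proposition \ref{prop:recover1}, presenting it as an immediate consequence of comparing Definition \ref{d:nice_weight} (with \(\Delta=1\)) against the definition of nice weights in \cite[Definition 4.1]{def}, which is exactly the term-by-term substitution you carry out. Your added remark that scaling by the deterministic factor \(1\) leaves the conditioning \(\sigma\)-algebras and conditional expectations unchanged is a harmless (and correct) elaboration of the same argument.
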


    The nice \(\Delta\)-weighted property provides quantitative information about the (in)stability of the subsystems combined with the admissible dwell times on them at an abstract level. The scalars \(w(j)\), \(j\in\P\) and \(w(i,j)\), \((i,j)\in E(\P)\) are obtained from Lyapunov-like functions corresponding to the individual subsystems (\'{a} la Assumptions \ref{a:Lyap-prop1} and \ref{a:Lyap-prop2}) and the choice of these functions is not unique. The condition that the vertex weights scaled by a choice of admissible dwell times, \(w(j)\Delta\), \(j\in\P\) and the edge weights, \(w(i,j)\), \((i,j)\in E(\P)\) are uniformly bounded is no loss of generality on account of the graph \(G(\P,E(\P))\) and the interval \([\Delta_{m}:\Delta_{M}]\) being finite. In addition, Definition \ref{d:nice_weight} does not require independence of the vertex weights scaled by an admissible dwell time and the edge weights. This feature is important because, for instance, given \(w(i,j)\) (i.e., \(\ln\mu_{ij}\)), the scalar \(w(j,i)\) (i.e., \(\ln\mu_{ji}\)) cannot be independent of \(w(i,j)\). Notice that we do not assume any particular probabilistic model (that may be tuned to specific applications) for the weights and the admissible dwell times.

    We now employ a probabilistic algorithm \cite[Algorithm 1]{def} for the detection of cycles on \(G(\P,E(\P))\). If \(G(\P,E(\P))\) is nicely connected and nicely \(\Delta\)-weighted, then a cycle obtained from our algorithm is \(\Delta\)-contractive with high probability.

    \begin{algorithm}[htbp]
			\caption{Cycle detection algorithm} \label{algo:cycle_detection}
		\begin{algorithmic}[1]
			\STATE Set \(k=0\).
            \STATE Pick \(v_{k}\in\P_{S}\) uniformly at random.
            \IF {\(N_{\P_{S}}^{+}(v_{k})\setminus\{v_{0},\ldots,v_{k}\}\neq\emptyset\)}\label{step:repeat_step1}
                \STATE Pick \(v_{k+1}\in N_{\P_{S}}^{+}(v_{k})\setminus\{v_{0},\ldots,v_{k}\}\) uniformly at random.
                \STATE Set \(k=k+1\).
                \STATE Go to Step \ref{step:repeat_step1}.
                \ELSE
                    \STATE Pick \(v_{k+1}=v_{i}\) such that \(v_{i}\in N_{\P_{S}}^{+}(v_{k})\) and \((k-i)\) is maximum.
                    \STATE Exit Algorithm \ref{algo:cycle_detection}.
            \ENDIF			
		\end{algorithmic}
	\end{algorithm}

    On the digraph \(G(\P,E(\P))\), Algorithm \ref{algo:cycle_detection} generates a walk in the following fashion: At the first step, a vertex corresponding to an IOSS subsystem is picked uniformly at random. At every \(k+1\)-th step we identify the subset of outneighbours of the vertex picked at the \(k\)-th step such that the vertices correspond to IOSS subsystems, and they have not been picked till (and including) step \(k\); then a vertex from the above subset is picked uniformly at random. If there is no such outneighbour corresponding to the IOSS vertices that were not picked earlier, the algorithm selects an outneighbour that is at the maximum `distance' from the current vertex in the already generated sequence, and the algorithm is stopped. Since we deal with finite directed graphs, it is evident that every walk generated by our algorithm is closed. In addition, the mechanism of repeating vertices (as described above) makes this closed walk a cycle. It follows that the length of the cycles is bounded above by the order of the graph, and hence the time complexity associated to Algorithm \ref{algo:cycle_detection} is linear in the order of the graph.

    \begin{proposition}{\cite[Proposition 1]{def}}
    \label{prop:mainres1}
        If the weighted directed graph \(G(\P,E(\P))\) is nicely connected, then a cycle \(W\) obtained from Algorithm \ref{algo:cycle_detection} has the following properties:
        \begin{itemize}[label = \(\circ\), leftmargin = *]
            \item all vertices in \(W\) are from \(\P_{S}\), and
            \item the length of \(W\) is at least \(\lfloor\Phi(\abs{\P_{S}})\rfloor\).
        \end{itemize}
    \end{proposition}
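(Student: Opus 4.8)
The plan is to analyze the execution of Algorithm \ref{algo:cycle_detection} step by step and track two invariants: (i) every vertex that the algorithm ever adds to its sequence lies in \(\P_{S}\), and (ii) the algorithm cannot terminate before it has appended \(\lfloor\Phi(\abs{\P_{S}})\rfloor\)-many vertices. For (i), I would argue by induction on the step counter \(k\). The base case is immediate: Step 2 picks \(v_{0}\in\P_{S}\) by construction. For the inductive step, whenever the algorithm appends a new vertex it does so either in the \texttt{if}-branch, where \(v_{k+1}\) is chosen from \(N_{\P_{S}}^{+}(v_{k})\setminus\{v_{0},\ldots,v_{k}\}\subseteq\P_{S}\), or in the \texttt{else}-branch, where \(v_{k+1}=v_{i}\) for some \(v_{i}\in N_{\P_{S}}^{+}(v_{k})\subseteq\P_{S}\); in the latter case \(v_{i}\) is actually one of the previously generated vertices, so it was already in \(\P_{S}\) by the inductive hypothesis. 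Either way the claim propagates, which establishes the first bullet. Since the closed walk is in fact a cycle (as explained in the paragraph preceding the statement — repeated vertices are excluded by the selection rule until forced closure), all of its vertices are from \(\P_{S}\).

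For the length bound, the key observation is the termination condition: the algorithm exits only when it reaches the \texttt{else}-branch, i.e., when \(N_{\P_{S}}^{+}(v_{k})\setminus\{v_{0},\ldots,v_{k}\}=\emptyset\) at some step \(k\). I would show this cannot happen for \(k<\lfloor\Phi(\abs{\P_{S}})\rfloor\). By nice connectivity (Definition \ref{d:nice_connect}), every vertex \(j\in\P\) satisfies \(d^{+}_{\P_{S}}(j)\geq\lfloor\Phi(\abs{\P_{S}})\rfloor\), so in particular \(\abs{N_{\P_{S}}^{+}(v_{k})}\geq\lfloor\Phi(\abs{\P_{S}})\rfloor\). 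At step \(k\) the set of already-chosen vertices is \(\{v_{0},\ldots,v_{k}\}\), which has cardinality at most \(k+1\). Hence
\[
\abs{N_{\P_{S}}^{+}(v_{k})\setminus\{v_{0},\ldots,v_{k}\}}\geq\lfloor\Phi(\abs{\P_{S}})\rfloor-(k+1),
\]
which is strictly positive whenever \(k+1<\lfloor\Phi(\abs{\P_{S}})\rfloor\), i.e., \(k\leq\lfloor\Phi(\abs{\P_{S}})\rfloor-2\). Therefore the algorithm is forced into the \texttt{if}-branch and appends a genuinely new vertex for every such \(k\), so by the time it can first enter the \texttt{else}-branch it has already generated at least \(\lfloor\Phi(\abs{\P_{S}})\rfloor\)-many distinct vertices \(v_{0},v_{1},\ldots\). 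Since the length of the resulting cycle equals the number of edges, which is at least the number of distinct vertices minus the overlap at closure — and a short bookkeeping check shows the closing edge does not reduce the count below \(\lfloor\Phi(\abs{\P_{S}})\rfloor\) — the second bullet follows.

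I expect the main obstacle to be the careful bookkeeping at the termination step: one must verify that the index \(i\) chosen in the \texttt{else}-branch, together with the tail \(v_{i},v_{i+1},\ldots,v_{k},v_{i}\), really does form a cycle of length at least \(\lfloor\Phi(\abs{\P_{S}})\rfloor\), rather than a short cycle sitting inside a long walk. The resolution is that the selection rule ``pick \(v_{i}\) with \((k-i)\) maximum'' forces the back-edge to reach as far back as possible, so the cycle extracted spans indices \(i,i+1,\ldots,k\); combined with the fact (just proved) that the walk has length at least \(\lfloor\Phi(\abs{\P_{S}})\rfloor\) before any \texttt{else}-branch is reachable, and that all vertices \(v_{0},\ldots,v_{k}\) prior to closure are distinct, one concludes the extracted cycle has at least \(\lfloor\Phi(\abs{\P_{S}})\rfloor\) edges. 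Everything else is a direct reading of the algorithm, and the statement is exactly \cite[Proposition 1]{def}, so no new ideas beyond this invariant-tracking are needed.
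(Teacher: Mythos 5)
The paper itself offers no proof of this proposition --- it is imported verbatim as \cite[Proposition 1]{def} --- so your attempt can only be judged on its own terms. Your argument for the first bullet is correct and complete: the inductive invariant that every appended vertex lies in \(\P_{S}\) follows immediately because both branches of the algorithm select from \(N_{\P_{S}}^{+}(v_{k})\subseteq\P_{S}\).

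For the second bullet there is a genuine gap at exactly the place you flag as ``the main obstacle.'' You correctly prove that the \emph{walk} \(v_{0},\ldots,v_{k}\) must contain at least \(\lfloor\Phi(\abs{\P_{S}})\rfloor\) distinct vertices before the \texttt{else}-branch can be reached, but the length of the extracted \emph{cycle} is \(k-i+1\), where \(i\) is the index chosen in the \texttt{else}-branch, and a long walk does not by itself prevent \(i\) from being close to \(k\) (the rule ``\((k-i)\) maximum'' only reaches as far back as the out-neighbours of \(v_{k}\) permit). Your stated resolution --- combining the back-edge rule with the walk-length bound --- therefore does not close the argument: the walk-length bound is not the relevant fact. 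What is needed is a pigeonhole count at the terminal vertex: the \texttt{else}-branch is entered precisely when \(N_{\P_{S}}^{+}(v_{k})\subseteq\{v_{0},\ldots,v_{k}\}\), and nice connectivity gives \(\abs{N_{\P_{S}}^{+}(v_{k})}\geq\lfloor\Phi(\abs{\P_{S}})\rfloor\); since \(v_{0},\ldots,v_{k}\) are distinct, at least \(\lfloor\Phi(\abs{\P_{S}})\rfloor\) distinct indices \(j\in\{0,\ldots,k\}\) satisfy \(v_{j}\in N_{\P_{S}}^{+}(v_{k})\), so the minimal such index \(i\) obeys \(i\leq k-\lfloor\Phi(\abs{\P_{S}})\rfloor+1\), whence the cycle \(v_{i},\ldots,v_{k},v_{i}\) has length \(k-i+1\geq\lfloor\Phi(\abs{\P_{S}})\rfloor\). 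With this one-line replacement of your final step (which also renders the walk-length bound unnecessary), the proof is complete.
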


    \begin{proposition}
    \label{prop:mainres2}
        Consider the family of systems \eqref{e:family}, the set of admissible switches between the subsystems, \(E(\P)\), and the admissible minimum and maximum dwell times on the subsystems, \(\Delta_{m}\) and \(\Delta_{M}\), respectively. Suppose that the underlying weighted directed graph \(G(\P,E(\P))\) of the switched system \eqref{e:swsys} is nicely connected and nicely \(\Delta\)-weighted. Then a cycle of length at least \(\lfloor\Phi(\abs{\P_{S}})\rfloor\) on \(G(\P,E(\P))\) obtained from Algorithm \ref{algo:cycle_detection} is \(\Delta\)-contractive with probability at least \(1-\exp\Biggl(-\frac{1}{2}\biggl(\frac{\alpha-\beta}{A+B}\biggr)^{2}\lfloor\Phi(\abs{\P_{S}})\rfloor\Biggr)\).
    \end{proposition}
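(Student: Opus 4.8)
The plan is to apply Proposition~\ref{prop:mainres1} to get hold of a concrete cycle, commit once and for all to the uniform choice of $\Delta$-parameters, and then argue that the scalar $\Gamma(W)$ of Definition~\ref{d:delta-contrac} is negative with the claimed probability by treating it as a bounded-difference martingale unrolled along the edges of that cycle and invoking the Azuma--Hoeffding inequality.

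\emph{Step 1.} First I would separate the two sources of randomness in Algorithm~\ref{algo:cycle_detection}: its own uniform vertex choices, which consult only the connectivity of $G(\P,E(\P))$, and the random vertex and edge weights, which are independent of those choices. Freezing an arbitrary realization of the former, nice connectivity together with Proposition~\ref{prop:mainres1} guarantees --- for every realization of the weights --- that the resulting cycle $W=v_{0},(v_{0},v_{1}),v_{1},\ldots,v_{n-1},(v_{n-1},v_{0}),v_{0}$ has all its vertices in $\P_{S}$, pairwise distinct, with length $n\ge L:=\lfloor\Phi(\abs{\P_{S}})\rfloor$. I would then set $\Delta_{v_{k}}:=\Delta$ for all $k$, with $\Delta\in[\Delta_{m}:\Delta_{M}]$ the value supplied by the nicely $\Delta$-weighted hypothesis; this is an admissible assignment of $\Delta$-parameters, so by Definition~\ref{d:delta-contrac} it remains only to bound the probability (over the weights) that $\Gamma(W)\ge 0$, where $\Gamma(W)=\sum_{k=0}^{n-1}Z_{k}$ with $Z_{k}:=w(v_{k})\Delta+w(v_{k},v_{k+1})$ and $v_{n}:=v_{0}$.

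\emph{Step 2.} Next I would unroll $\Gamma(W)$ along the cycle: let $\mathcal F_{k}$ be the $\sigma$-field generated by the weights $w(v_{0})\Delta,w(v_{0},v_{1}),\ldots,w(v_{k-1})\Delta,w(v_{k-1},v_{k})$ encountered along the first $k$ edges. Since $W$ visits pairwise distinct vertices and uses pairwise distinct edges, neither $w(v_{k})\Delta$ nor $w(v_{k},v_{k+1})$ lies in $\mathcal F_{k}$, whereas $\mathcal F_{k}$ is a sub-$\sigma$-field of each of the conditioning fields in Definition~\ref{d:nice_weight}; the tower property then gives $\EE[w(v_{k})\Delta\mid\mathcal F_{k}]=-\beta$ and $\EE[w(v_{k},v_{k+1})\mid\mathcal F_{k}]\le\alpha$ (the sign on the first being the one appropriate to a $\P_{S}$-vertex, whose scaled weight is negative), hence $\EE[Z_{k}\mid\mathcal F_{k}]\le\alpha-\beta<0$. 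Because $w(v_{k})\Delta\in[-B,0)$ and $w(v_{k},v_{k+1})=\ln\mu_{v_{k}v_{k+1}}\in[0,A]$, the variable $Z_{k}$, and hence the martingale difference $D_{k}:=Z_{k}-\EE[Z_{k}\mid\mathcal F_{k}]$, ranges over an interval of length $A+B$.

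\emph{Step 3.} Finally, $M_{m}:=\sum_{k=0}^{m-1}D_{k}$ is a martingale with increments bounded by $A+B$, and $\Gamma(W)=M_{n}+\sum_{k=0}^{n-1}\EE[Z_{k}\mid\mathcal F_{k}]\le M_{n}+n(\alpha-\beta)$, so $\{\Gamma(W)\ge 0\}\subseteq\{M_{n}\ge n(\beta-\alpha)\}$, and Azuma--Hoeffding gives
\[
\Pr[\Gamma(W)\ge 0]\le\exp\!\Bigl(-\frac{n^{2}(\beta-\alpha)^{2}}{2n(A+B)^{2}}\Bigr)=\exp\!\Bigl(-\frac{n(\beta-\alpha)^{2}}{2(A+B)^{2}}\Bigr)\le\exp\!\Bigl(-\frac{1}{2}\Bigl(\frac{\alpha-\beta}{A+B}\Bigr)^{2}L\Bigr)
\]
since $n\ge L$ and $(\beta-\alpha)^{2}=(\alpha-\beta)^{2}$. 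As this holds for every frozen realization of Algorithm~\ref{algo:cycle_detection}'s choices it holds unconditionally, and on the complementary event the assignment $\Delta_{v_{k}}\equiv\Delta$ makes $\Gamma(W)<0$, certifying $\Delta$-contractivity of $W$; this gives the stated probability. I expect the main obstacle to be Step~2 --- making sure that the conditioning in Definition~\ref{d:nice_weight} (``given all the \emph{other} scaled vertex weights and edge weights'') really is consistent with revealing the weights one cycle-edge at a time, which is exactly where the pairwise distinctness of the cycle's vertices and edges and the independence of Algorithm~\ref{algo:cycle_detection}'s draws from the weights enter, together with keeping the sign of the $\P_{S}$-vertex weights straight; everything after that is a black-box application of a standard concentration bound.
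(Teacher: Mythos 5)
Your argument is correct and is essentially the paper's own: the paper's proof simply invokes the reasoning of \cite[Proof of Theorem 4.2]{def} with \(w(v_{k})\) replaced by \(w(v_{k})\Delta\) and \(\Delta_{v_{k}}=\Delta\), and that reasoning is precisely the Azuma--Hoeffding bound on the martingale obtained by revealing the weights edge-by-edge along the cycle supplied by Proposition \ref{prop:mainres1}, which is what you spell out. The only difference is that you make the cited argument self-contained (the sign convention for \(\P_{S}\)-vertex weights, the tower-property step justifying the conditional bounds, and the independence of the algorithm's random choices from the weights), all of which the paper leaves implicit in the citation.
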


    \begin{proof}
        Let \(W = v_{0},(v_{0},v_{1}),v_{1},\ldots,v_{n-1},(v_{n-1},v_{0}),v_{0}\) be a cycle obtained from Algorithm \ref{algo:cycle_detection}. It follows under the set of arguments employed in \cite[Proof of Theorem 4.2]{def} with \(w(v_{k})\) replaced by \(w(v_{k})\Delta\) that \(W\) is \(\Delta\)-contractive, where \(\Delta_{v_{k}} = \Delta\), \(k=0,1,\ldots,n-1\), with probability at least \(1-\exp\Biggl(-\frac{1}{2}\biggl(\frac{\alpha-\beta}{A+B}\biggr)^{2}\lfloor\Phi(\abs{\P_{S}})\rfloor\Biggr)\).
    \end{proof}

    \begin{remark}
    \label{rem:prob-vs-det}
        For \(N\) not large, recently in \cite{xyz4}, we applied deterministic negative cycle detection algorithms on \(G(\P,E(\P))\) for the synthesis of \(\Delta\)-contractive cycles. These algorithms require a priori knowledge of the vertex and edge weights and often explores the entire graph. As a result, they are not suitable for large-scale switched systems. Notice that Algorithm \ref{algo:cycle_detection} does not require the scalars \(w(j)\Delta\), \(j\in\P\), \(w(i,j)\), \((i,j)\in E(\P)\) to be stored in memory prior to its application and does not conduct searches over multiple paths on the graph that satisfy certain properties. As a result, it is suitable for large weighted directed graphs corresponding to large-scale switched systems. For such graphs, Algorithm \ref{algo:cycle_detection} provides probabilistic guarantees for the design of \(\Delta\)-contractive cycles. In addition, this algorithm has an ``online learning'' property in the sense that starting with a rough probabilistic description of \(G(\P,E(\P))\) (i.e., without the knowledge of the precise values of the weights), we explore the graph and synthesize a cycle during this exploration that is \(\Delta\)-contractive with high probability.
    \end{remark}

    We now employ \cite[Algorithm 1]{xyz4} to construct a periodic switching signal \(\sigma\) by employing a cycle \(W = v_{0},(v_{0},v_{1}),v_{1},\ldots,v_{n-1},(v_{n-1},v_{0}),v_{0}\) on \(G(\P,E(\P))\) obtained from Algorithm \ref{algo:cycle_detection}.

    \begin{algorithm}[htbp]
			\caption{Construction of \(\sigma\)} \label{algo:swsig_design}
		\begin{algorithmic}[1]
			\STATE Set \(k=0\) and \(\tau_{0} = 0\).
            \FOR {\(p=kn,kn+1,\ldots,(k+1)n-1\)}\label{step:repeat_step2}
                \STATE Set \(\sigma(\tau_{p}) = v_{p-kn}\) and \(\tau_{p+1} = \tau_{p} + \Delta\).
            \ENDFOR
            \STATE Set \(k=k+1\) and go to Step \ref{step:repeat_step2}.			
		\end{algorithmic}
	\end{algorithm}
    It is immediate that a switching signal obtained from Algorithm \ref{algo:swsig_design} is periodic with period $\displaystyle{\Delta_{W} := \sum_{k=0}^{n-1}\Delta = n\Delta}$.
    \begin{proposition}
    \label{prop:mainres3}
        Consider the family of systems \eqref{e:family}, the set of admissible switches between the subsystems, \(E(\P)\), and the admissible minimum and maximum dwell times on the subsystems, \(\Delta_{m}\) and \(\Delta_{M}\), respectively. Suppose that the underlying weighted directed graph \(G(\P,E(\P))\) of the switched system \eqref{e:swsys} is nicely connected and nicely \(\Delta\)-weighted. Then a switching signal \(\sigma\) obtained from Algorithm \ref{algo:swsig_design} satisfies the following properties:
        \begin{enumerate}[label = \roman*), leftmargin = *]
            \item \(\sigma\in\Sw\), and
            \item \(\sigma\) preserves IOSS of the switched system \eqref{e:swsys} with probability at least \(1-\exp\Biggl(-\frac{1}{2}\biggl(\frac{\alpha-\beta}{A+B}\biggr)^{2}\lfloor\Phi(\abs{\P_{S}})\rfloor\Biggr)\).
        \end{enumerate}
    \end{proposition}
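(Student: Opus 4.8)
The plan is to establish the two assertions separately: part~(i) follows directly from the mechanics of Algorithm~\ref{algo:swsig_design}, and part~(ii) is obtained by chaining Propositions~\ref{prop:mainres1} and~\ref{prop:mainres2} to produce, with the stated probability, a $\Delta$-contractive cycle, and then running the multiple-Lyapunov-function analysis of \cite{xyz4} on the switching signal built from that cycle. For part~(i): Algorithm~\ref{algo:swsig_design} produces switching instants with $\tau_{p+1}-\tau_{p}=\Delta$ for every $p$ and sets $\sigma(\tau_{p})=v_{p-kn}$ for $p\in\{kn,\ldots,(k+1)n-1\}$, where $W = v_{0},(v_{0},v_{1}),v_{1},\ldots,v_{n-1},(v_{n-1},v_{0}),v_{0}$ is the cycle returned by Algorithm~\ref{algo:cycle_detection}. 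Since $G(\P,E(\P))$ is nicely $\Delta$-weighted we have $\Delta\in[\Delta_{m}:\Delta_{M}]$ by Definition~\ref{d:nice_weight}, so every inter-switch duration is admissible; and since $W$ is a cycle on $G(\P,E(\P))$, each consecutive pair $(v_{k},v_{k+1})$, $0\le k\le n-2$, as well as the wrap-around pair $(v_{n-1},v_{0})$, lies in $E(\P)$, whence $(\sigma(\tau_{p}),\sigma(\tau_{p+1}))\in E(\P)$ for all $p$. Therefore $\sigma$ is admissible, i.e.\ $\sigma\in\Sw$.

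For part~(ii) I would argue in three steps. \emph{Step 1 (a good cycle with high probability).} Because $G(\P,E(\P))$ is nicely connected, Proposition~\ref{prop:mainres1} guarantees that $W$ has all vertices in $\PS$ and length $n\ge\lfloor\Phi(\abs{\PS})\rfloor$; since $G(\P,E(\P))$ is moreover nicely $\Delta$-weighted, Proposition~\ref{prop:mainres2} then yields that, on an event $\mathcal{G}$ of probability at least $1-\exp\bigl(-\tfrac12(\tfrac{\alpha-\beta}{A+B})^{2}\lfloor\Phi(\abs{\PS})\rfloor\bigr)$, $W$ is $\Delta$-contractive with $\Delta_{v_{k}}=\Delta$ for all $k$, i.e.\ $\Gamma(W)=\Delta\sum_{k=0}^{n-1}w(v_{k})+\sum_{k=0}^{n-1}w(v_{k},v_{k+1})<0$ (indices mod $n$). \emph{Step 2 (one period contracts the Lyapunov estimate).} Working on $\mathcal{G}$, I would fix a period $[\tau_{kn},\tau_{(k+1)n})$ of $\sigma$, iterate \eqref{e:Lyap-prop2} $\Delta$ times on each visited subsystem $v_{j}\in\PS$ (so $0<\lambda_{v_{j}}<1$ and $w(v_{j})=\ln\lambda_{v_{j}}$), and apply \eqref{e:Lyap-prop3} at each of the $n$ switches (contributing the factor $\mu_{v_{j}v_{j+1}}=\exp(w(v_{j},v_{j+1}))$); telescoping over the period yields an estimate of the form
\[
V_{\sigma(\tau_{(k+1)n})}\bigl(x(\tau_{(k+1)n})\bigr)\le e^{\Gamma(W)}\,V_{\sigma(\tau_{kn})}\bigl(x(\tau_{kn})\bigr)+\rho_{1}\bigl(\norm{v}_{\tau_{(k+1)n}}\bigr)+\rho_{2}\bigl(\norm{y}_{\tau_{(k+1)n}}\bigr),
\]
with $\rho_{1},\rho_{2}\in\K$ assembled from $\gamma_{1},\gamma_{2}$ and the finitely many scalars $\lambda_{i},\mu_{ij}$ along the cycle, and $e^{\Gamma(W)}\in(0,1)$. \emph{Step 3 (from a contracting period to IOSS).} Iterating this per-period bound over $k$ — the input/output contributions then form a convergent geometric series because $e^{\Gamma(W)}<1$ — combining it with the within-period growth bounds obtained from \eqref{e:Lyap-prop2}--\eqref{e:Lyap-prop3} over at most $n\Delta$ steps, and sandwiching the $V_{i}$'s between $\underline{\alpha}$ and $\overline{\alpha}$ via \eqref{e:Lyap-prop1}, I would assemble an estimate of the form \eqref{e:dt-ioss} with $\chi_{1}\in\KL$ and $\chi_{2},\chi_{3}\in\K$, valid for all bounded $v$ and all $x(0)$; that is, $\sigma$ preserves IOSS of \eqref{e:swsys}. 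This is precisely the stability analysis carried out in \cite{xyz4} for switching signals generated from $\Delta$-contractive cycles, so I would cite it rather than reproduce it, the only bookkeeping change being the replacement of $w(v_{k})$ by $w(v_{k})\Delta$ in the contractivity exponent (as in the proof of Proposition~\ref{prop:mainres2}). Since the IOSS conclusion holds on $\mathcal{G}$, part~(ii) follows with the claimed probability.

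The only part that is not pure bookkeeping is Steps~2--3: converting the sign condition $\Gamma(W)<0$ into a genuine IOSS estimate while keeping the $\gamma_{1}(\norm{v}),\gamma_{2}(\norm{y})$ terms under control through repeated application of \eqref{e:Lyap-prop2} and the switch inequalities \eqref{e:Lyap-prop3} over every period; one must check that these terms telescope into a geometric series (thanks to $e^{\Gamma(W)}<1$) and hence remain dominated by class-$\K$ functions of the sup-norms of $v$ and $y$. The fact that all vertices of $W$ lie in $\PS$ (Proposition~\ref{prop:mainres1}) is what makes this manageable: there are no unstable modes inside a period, the within-period growth bound is uniform, and the analysis of \cite{xyz4} applies essentially verbatim with the constant $\Delta$-parameter $\Delta_{v_{k}}\equiv\Delta$.
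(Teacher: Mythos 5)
Your proposal is correct and follows essentially the same route as the paper: part (i) by direct inspection of Algorithm \ref{algo:swsig_design} (with admissibility of the dwell time \(\Delta\) coming from Definition \ref{d:nice_weight} and admissibility of the transitions from \(W\) being a cycle on \(G(\P,E(\P))\)), and part (ii) by invoking the deterministic result of \cite{xyz4} that a switching signal built from a \(\Delta\)-contractive cycle preserves IOSS, combined with the probabilistic guarantee of Proposition \ref{prop:mainres2}. The paper simply cites \cite[Theorem 1]{xyz4} where you sketch the telescoping Lyapunov argument behind it, but the logical structure is identical.
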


    \begin{proof}
        i) is immediate.

        From \cite[Theorem 1]{xyz4} it follows that a switching signal \(\sigma\) constructed in Algorithm \ref{algo:swsig_design} by employing a \(\Delta\)-contractive cycle satisfies ii). Now, the cycles constructed via Algorithm \ref{algo:cycle_detection} are \(\Delta\)-contractive with probability at least \(1-\exp\Biggl(-\frac{1}{2}\biggl(\frac{\alpha-\beta}{A+B}\biggr)^{2}\lfloor\Phi(\abs{\P_{S}})\rfloor\Biggr)\). Employing such a cycle in Algorithm \ref{algo:swsig_design} leads to the assertion of Proposition \ref{prop:mainres3}.
    \end{proof}

    The following claim is straightforward and recovers the first assertion of \cite[Theorem 4.3]{def} on GAS of a switched linear system under unrestricted dwell times on the subsystems:
    \begin{proposition}
    \label{prop:recover2}
        Consider the family of systems \eqref{e:family}. Suppose that \(f_{i}(x,v) = A_{i}x\), \(A_{i}\in\R^{d\times d}\), \(h_{i}\equiv 0\) for all \(i\in\P\) and \(\Delta_{m} = 1\). In addition, the underlying weighted directed graph \(G(\P,E(\P))\) of the switched system \eqref{e:swsys} is nicely connected and nicely \(\Delta\)-weighted with \(\Delta = 1\). Then a switching signal \(\sigma\) obtained from Algorithm \ref{algo:swsig_design} preserves GAS of the switched system \eqref{e:swsys} with probability at least \(1-\exp\Biggl(-\frac{1}{2}\biggl(\frac{\alpha-\beta}{A+B}\biggr)^{2}\lfloor\Phi(\abs{\P_{S}})\rfloor\Biggr)\).
    \end{proposition}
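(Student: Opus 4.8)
The plan is to obtain this statement as a direct specialization of Proposition~\ref{prop:mainres3}, once two routine reductions are put in place. First I would record that for the linear, output-free family \(f_{i}(x,v)=A_{i}x\), \(h_{i}\equiv 0\), Assumptions~\ref{a:Lyap-prop1} and \ref{a:Lyap-prop2} hold automatically with the input/output terms vanishing: for each \(i\in\P_{S}\) the matrix \(A_{i}\) is Schur, so a (scaled) Lyapunov equation yields a quadratic \(V_{i}(\xi)=\xi^{\top}P_{i}\xi\) with \(P_{i}\succ 0\) and \(V_{i}(A_{i}\xi)\leq\lambda_{i}V_{i}(\xi)\) for some \(\lambda_{i}\in(0,1)\); for each \(i\in\P_{U}\) an analogous quadratic form gives \(V_{i}(A_{i}\xi)\leq\lambda_{i}V_{i}(\xi)\) with \(\lambda_{i}>1\). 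Since \(h_{i}\equiv 0\) and \(f_{i}\) does not depend on \(v\), \eqref{e:Lyap-prop1}--\eqref{e:Lyap-prop2} hold with \(\gamma_{1}=\gamma_{2}\equiv 0\) and the usual quadratic class-\(\Kinfty\) bounds. Because \(\P\) and \(E(\P)\) are finite and each \(V_{i}\) is a positive definite quadratic form, \(\mu_{ij}:=\max\{1,\ \sup_{\xi\neq 0}V_{j}(\xi)/V_{i}(\xi)\}\) is finite for every \((i,j)\in E(\P)\), so Assumption~\ref{a:Lyap-prop2} holds and the weighted digraph \(G(\P,E(\P))\) with the vertex/edge weights of \S\ref{ss:digraph} is well defined.

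Second, I would observe that in this setting IOSS of \eqref{e:swsys} coincides with GAS. With \(v\equiv 0\) (indeed \(f_{i}\) is input-free) the dynamics reduce to \(x(t+1)=A_{\sigma(t)}x(t)\), and since \(y\equiv 0\) the estimate \eqref{e:dt-ioss} collapses to \(\norm{x(t)}\leq\chi_{1}(\norm{x(0)},t)\) for all \(t\in\N_{0}\), i.e.\ precisely the defining estimate of GAS (cf.\ the remark following Definition~\ref{d:dt-ioss}). Hence a switching signal preserves IOSS of \eqref{e:swsys} if and only if it preserves GAS of \eqref{e:swsys}.

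With these two observations the argument is immediate: the hypotheses ``\(G(\P,E(\P))\) nicely connected and nicely \(\Delta\)-weighted with \(\Delta=1\)'' together with \(\Delta_{m}=1\) are exactly those required by Proposition~\ref{prop:mainres3} (and, by Proposition~\ref{prop:recover1}, they additionally render \(G(\P,E(\P))\) nicely weighted in the sense of \cite{def}), so a switching signal \(\sigma\) produced by Algorithm~\ref{algo:swsig_design} lies in \(\Sw\) and preserves IOSS of \eqref{e:swsys} with probability at least \(1-\exp\bigl(-\tfrac{1}{2}\bigl(\tfrac{\alpha-\beta}{A+B}\bigr)^{2}\lfloor\Phi(\abs{\P_{S}})\rfloor\bigr)\). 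By the equivalence noted above, the same \(\sigma\) preserves GAS of \eqref{e:swsys} with the same probability bound, which is the claim. The only non-mechanical point is the verification of Assumptions~\ref{a:Lyap-prop1}--\ref{a:Lyap-prop2} for the linear family, and even there nothing beyond standard (converse) Lyapunov arguments for discrete-time linear systems is needed; I do not anticipate a genuine obstacle.
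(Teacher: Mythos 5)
Your proposal is correct and matches the paper's intent: the paper offers no written proof, merely calling the claim ``straightforward,'' and the intended argument is exactly your specialization of Proposition~\ref{prop:mainres3} to the linear, input-free, output-free case where the IOSS estimate \eqref{e:dt-ioss} collapses to the GAS estimate. Your additional verification of Assumptions~\ref{a:Lyap-prop1}--\ref{a:Lyap-prop2} via quadratic Lyapunov functions is sound but not strictly necessary, since the hypothesis that \(G(\P,E(\P))\) is nicely connected and nicely \(\Delta\)-weighted already presupposes the existence of the weights \(\lambda_{i}\) and \(\mu_{ij}\).
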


    To summarize, we discussed an algorithm to design periodic switching signals that obey pre-specified restrictions on admissible switches between the subsystems and admissible minimum and maximum dwell times on the subsystems. We also identified sufficient conditions on the following components under which a switching signal obtained from our algorithm preserves IOSS of the switched system \eqref{e:swsys} with high probability:
    \begin{enumerate}[label = \roman*), leftmargin = *]
        \item the (in)stability of the elements in the family \eqref{e:family} (at an abstract level of certain scalars obtained from Lyapunov-like functions corresponding to the subsystems) in terms of scaled vertex weights and edge weights of the underlying weighted directed graph of a switched system,
        \item the admissible switches between the subsystems in terms of connectivity of underlying weighted directed graph of a switched system, and
        \item the admissible minimum and maximum dwell times on the subsystems in terms of the scaling factors of vertex weights of the underlying weighted directed graph of a switched system.
    \end{enumerate}

    We now present a numerical experiment to demonstrate our results.
\section{A numerical experiment}
\label{s:num_ex}
    We consider a nicely connected and nicely \(\Delta\)-weighted directed graph \(G(\P,E(\P))\) with
    \begin{itemize}[label = \(\circ\), leftmargin = *]
        \item \(\abs{\P_S} = 1000\), \(\Phi(r) = \frac{1}{10}\sqrt{r}\), \(d^{+}(j) = \lfloor\Phi(\abs{\P_S})\rfloor\) for all \(j\in\P\),
        \item \(\Delta_m = 2\), \(\Delta_M = 4\), and
        \item \(A = 2.5\), \(B = 5\), \(\alpha = 0\) and \(\beta = 2.5\).
    \end{itemize}
    We extract and fix a cycle \(W\) obtained from Algorithm \ref{algo:cycle_detection} on \(\P_S\subset\P\). The vertex and edge weights of \(W\) and the dwell times on the vertices (subsystems) are sampled uniformly at random \(1000\) times from the intervals as stipulated in Definition \ref{d:nice_weight}. We calculate
    \begin{align*}
        X_n = \sum_{k=1}^{n}w(v_{k-1},v_{k})-\sum_{k=0}^{n}w(v_k)\Delta
    \end{align*}
    empirically for \(n\) being the length of the cycle \(W\).

    The above experiment is repeated for cycles of different length \(n\) obtained from Algorithm \ref{algo:cycle_detection} with uniformly randomly selected initial vertex. We plot the empirical probability of \(\{X_n < 0\}\) versus \(n\) in Figure \ref{fig:plot}. An overwhelming empirical probability is observed.

    \begin{figure}[htbp]
    \centering
        \includegraphics[scale=0.7]{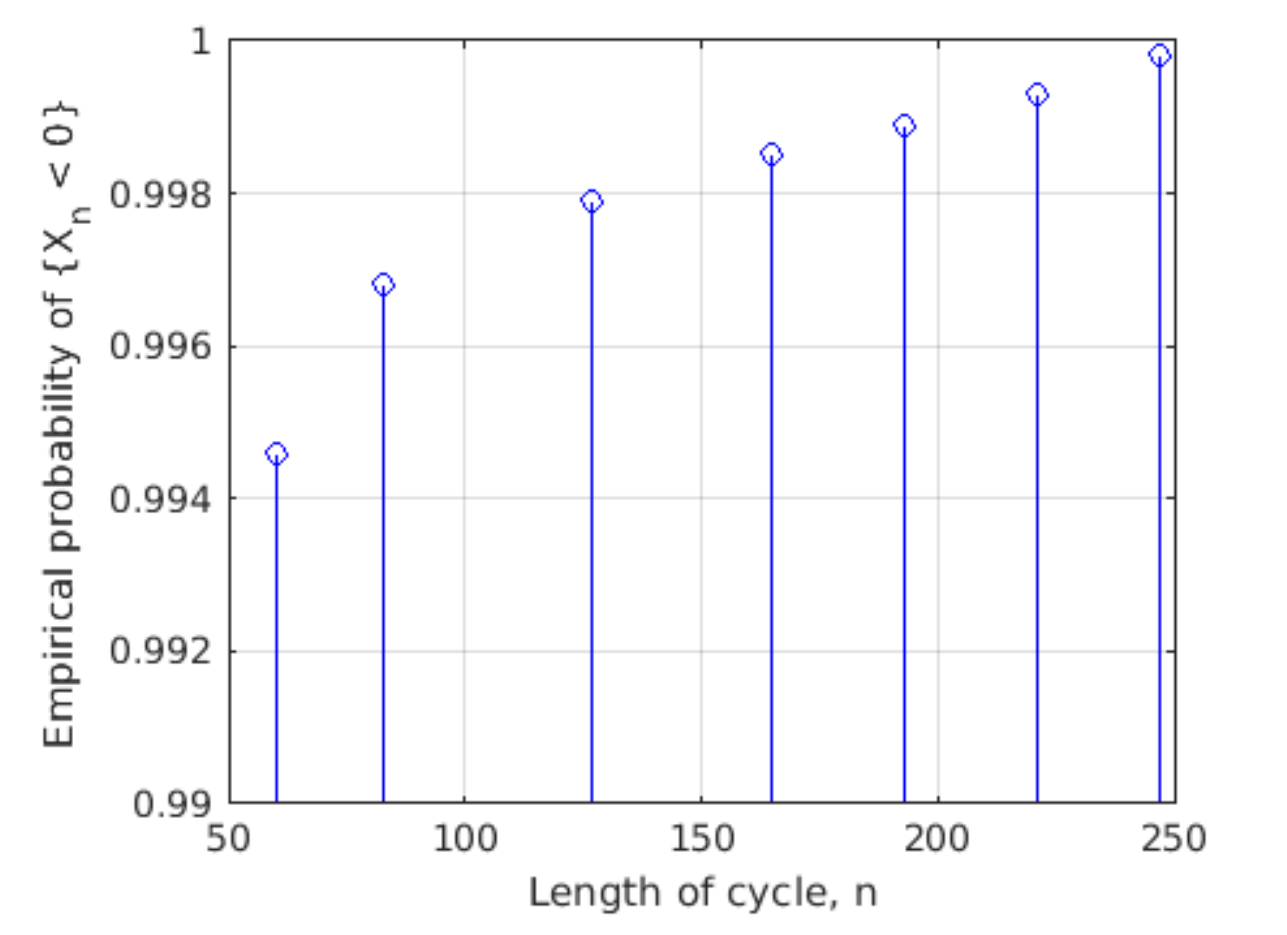}
        \caption{Plot for the empirical probability of a cycle being \(\Delta\)-contractive against its length}\label{fig:plot}
    \end{figure}
\section{Conclusion}
\label{s:concln}
    Notice that our stabilizing switching signals activate only IOSS subsystems. In addition, a successful application of our cycle detection algorithm assumes certain knowledge of the Lyapunov-like functions corresponding to the individual subsystems in an expected sense. Future research directions include accommodating unstable subsystems (vertices) in a \(\Delta\)-contractive cycle and exploring the possible extensions of the ``online learning'' nature associated to Algorithm \ref{algo:cycle_detection} to the learning of quantitative measures of (in)stability of the subsystems. These matters are currently under investigation and will be reported elsewhere.


\begin{thebibliography}{10}

\bibitem{def}
{\sc N.~Balachandran, A.~Kundu, and D.~Chatterjee}, {\em Randomized algorithms
  for stabilizing switching signals}, Math. Control Relat. Fields, 9 (2019),
  pp.~159--174.

\bibitem{Bollobas}
{\sc B.~Bollob{\'a}s}, {\em Modern graph theory}, vol.~184 of Graduate Texts in
  Mathematics, Springer-Verlag, New York, 1998.

\bibitem{Branicky98}
{\sc M.~S. Branicky}, {\em Multiple {L}yapunov functions and other analysis
  tools for switched and hybrid systems}, IEEE Trans. Automat. Control, 43
  (1998), pp.~475--482.
\newblock Hybrid control systems.

\bibitem{Cai_Lyap}
{\sc C.~Cai and A.~R. Teel}, {\em Input-output-to-state stability for
  discrete-time systems}, Automatica J. IFAC, 44 (2008), pp.~326--336.

\bibitem{Sun}
{\sc S.~S. Ge and Z.~Sun}, {\em Switched Linear Systems: Control and Design},
  Springer, London, 2005.

\bibitem{Heydari_ACC16}
{\sc A.~Heydari}, {\em Approximate solution to discrete-time optimal switching
  problems with minimum dwell time constraint}, Proceedings of the American
  Control Conference,  (2016), pp.~703--708.

\bibitem{xyz3}
{\sc A.~Kundu}, {\em Stabilizing switched nonlinear systems under restricted
  switching}, in Proceedings of the 21st ACM Conf. on Hybrid Systems:
  Computation and Control, Apr 2018, pp.~101--110.

\bibitem{xyz4}
\leavevmode\vrule height 2pt depth -1.6pt width 23pt, {\em
  Input/output-to-state stability of discrete-time switched nonlinear systems
  under restricted switching}, in Proceedings of the 5th Indian Control
  Conference, Jan 2019, pp.~495--499.

\bibitem{Liberzon}
{\sc D.~Liberzon}, {\em Switching in {S}ystems and {C}ontrol}, Systems \&
  Control: Foundations \& Applications, Birkh\"auser Boston Inc., Boston, MA,
  2003.

\bibitem{Antsaklis_survey}
{\sc H.~Lin and P.~J. Antsaklis}, {\em Stability and stabilizability of
  switched linear systems: a survey of recent results}, IEEE Trans. Automat.
  Control, 54(2) (2009), pp.~308--322.

\bibitem{Mancilla2008}
{\sc J.~L. Mancilla-Aguilar and R.~A. Garc{\'{\i}}a}, {\em Input/output
  stability of systems with switched dynamics and outputs}, {Systems and
  Control Letters}, 57 (2008), pp.~726--732.

\bibitem{Liberzon_IOSS}
{\sc M.~A. M{\"u}ller and D.~Liberzon}, {\em Input/output-to-state stability
  and state-norm estimators for switched nonlinear systems}, Automatica J.
  IFAC, 48 (2012), pp.~2029--2039.

\end{thebibliography}

\end{document}